\theoremstyle{plain}
\newtheorem{thm}{Theorem}[section]
\newtheorem{theorem}[thm]{Theorem}
\newtheorem*{theoremA}{Main Theorem}
\newtheorem{proposition}[thm]{Proposition}
\newtheorem{conjecture}[thm]{Conjecture}
\theoremstyle{definition}
\newtheorem{definition}[thm]{Definition}
\newtheorem{example}[thm]{Example}
\newtheorem{thevarthm}[thm]{\varthmname}
\newenvironment{varthm*}[1]{\trivlist\item[]{\bf #1.}\it}{\endtrivlist}
\renewcommand\geq{\geqslant}
\renewcommand\leq{\leqslant}
\newcommand\be{\begin{eqnarray*}}
\newcommand\ee{\end{eqnarray*}}
\newcommand\call{{\mathcal L}}
\newcommand\newop[2]{\def#1{\mathop{\rm #2}\nolimits}}
\newop\log{log}
\newop\ord{ord}
\newop\Gal{Gal}
\newop\SL{SL}
\newop\Bl{Bl}
\newop\mult{mult}
\newop\mass{mass}
\newop\div{div}
\newop\codim{codim}
\newop\sing{sing}
\newop\vdim{vdim}
\newop\edim{edim}
\newop\Ass{Ass}
\newop\size{size}
\newop\reg{reg}
\newop\satdeg{satdeg}
\newop\supp{supp}
\newop\Neg{Neg}
\newop\Nef{Nef}
\newop\Nefh{Nef_H}
\newop\Eff{Eff}
\newop\Zar{Zar}
\newop\MB{MB}
\newop\MBxC{MB\mathit{(x,C)}}
\newop\NnB{NnB}
\newop\Bigg{Big}
\newop\Effbar{\overline{\Eff}}
\def\keywordname{{\bfseries Keywords}}%
\def\keywords#1{\par\addvspace\medskipamount{\rightskip=0pt plus1cm
\def\and{\ifhmode\unskip\nobreak\fi\ $\cdot$
}\noindent\keywordname\enspace\ignorespaces#1\par}}
\def\subclassname{{\bfseries Mathematics Subject Classification
(2000)}\enspace}
\def\subclass#1{\par\addvspace\medskipamount{\rightskip=0pt plus1cm
\def\and{\ifhmode\unskip\nobreak\fi\ $\cdot$
}\noindent\subclassname\ignorespaces#1\par}}
\begin{document}
\title{Harbourne constants and arrangements of lines on smooth hypersurfaces in $\mathbb{P}^3_{\mathbb{C}}$}
\author{Piotr Pokora}
\date{\today}
\maketitle
\thispagestyle{empty}
\begin{abstract}
In this note we find a bound for the so-called linear Harbourne constants for smooth hypersurfaces in $\mathbb{P}^{3}_{\mathbb{C}}$.
\keywords{line configurations, Miyaoka inequality, blow-ups, negative curves, the Bounded Negativity Conjecture}
\subclass{14C20, 14J70}
\end{abstract}

\section{Introduction}

   In this short note we find a global estimate for Harbourne constants which were introduced in \cite{BdRHHLPSz} in order to capture and measure
   the bounded negativity on various birational models of an algebraic surface.
\begin{definition}
   Let $X$ be a smooth projective surface. We say that $X$ \emph{has bounded negativity}
   if there exists an integer $b(X)$ such that for every \emph{reduced} curve $C \subset X$ one has the bound
   $$C^{2} \geq -b(X).$$
\end{definition}
   The bounded negativity conjecture (BNC for short) is one of the most intriguing problems in the theory of projective surfaces and
   attracts currently a lot of attention, see \cite{Duke, BdRHHLPSz, Harbourne1, XR}. It can be formulated as follows.
\begin{conjecture}[BNC]
   An arbitrary smooth \emph{complex} projective surface has bounded negativity.
\end{conjecture}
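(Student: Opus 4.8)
The natural starting point is the adjunction formula, which converts the self-intersection of a curve into genus and canonical data. For a reduced irreducible curve $C\subset X$ one has $2p_a(C)-2=C^2+K_X\cdot C$, hence
\[
 C^2=2p_a(C)-2-K_X\cdot C .
\]
Because the arithmetic genus of an integral curve is non-negative, the only source of negativity on the right-hand side is the term $-K_X\cdot C$. The first reduction is to the irreducible case: writing a reduced curve as $C=\sum_i C_i$ with distinct components, $C^2=\sum_i C_i^2+2\sum_{i<j}C_i\cdot C_j\ge\sum_i C_i^2$, so a uniform lower bound for integral curves, together with control on how many components can simultaneously be highly negative, would yield the statement. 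The second reduction is to a minimal model: running the minimal model program replaces $X$ by a minimal surface after finitely many contractions, under which self-intersections change in a controlled way, so it would suffice to treat minimal surfaces according to their Kodaira dimension.

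The cases governed by canonical positivity then fall out immediately. If $-K_X$ is nef---as for del Pezzo surfaces, and in the case where $K_X$ is numerically trivial for abelian, bielliptic, K3 and Enriques surfaces---then $K_X\cdot C\le 0$ and the displayed identity gives $C^2\ge 2p_a(C)-2\ge -2$, so $b(X)=2$ works. The trouble begins the moment $K_X$ becomes positive: for $K_X$ nef the same identity only yields $C^2\ge -2-K_X\cdot C$, which is useless unless the intersection number $K_X\cdot C$---equivalently the genus of $C$---is bounded independently of $C$, and on surfaces of positive Kodaira dimension no such bound is available a priori.

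This is exactly where I expect the plan to stall, and it is the reason the statement appears here as a conjecture rather than a theorem. Controlling $-K_X\cdot C$ uniformly for integral curves of unboundedly large degree and genus is, on blow-ups of $\P^2_{\C}$ at many points, essentially equivalent to presently open statements of Nagata and SHGH type about the negative curves such surfaces carry, and I have no expectation of settling these here. The realistic way forward, and the one pursued below, is to replace the exact self-intersection bound by a quantitative surrogate---a Bogomolov--Miyaoka orbifold inequality applied to the logarithmic tangent sheaf of a configuration---which produces effective, uniform lower bounds (the Harbourne constants recalled in the introduction) in concrete geometric situations, here the line arrangements lying on smooth hypersurfaces in $\P^3_{\C}$, even though the conjecture in full generality remains out of reach.
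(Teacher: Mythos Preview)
The statement you were asked to address is labelled in the paper as a \emph{Conjecture}, not a theorem: the Bounded Negativity Conjecture is open, and the paper offers no proof of it. There is therefore no ``paper's own proof'' against which to compare your attempt. What the paper does instead is use BNC as motivation for the Harbourne constants and then prove the Main Theorem about line configurations on hypersurfaces, which is a different (and much weaker, but effective) statement.

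Your write-up is not a proof either, and to your credit you say so explicitly: you reduce via adjunction to controlling $K_X\cdot C$, observe that this succeeds when $-K_X$ is nef (recovering the known $C^2\ge -2$ bound for del Pezzo, K3, Enriques, abelian surfaces), and then correctly identify that for $K_X$ positive no uniform bound on $K_X\cdot C$ is available, which is precisely the open content of the conjecture. That diagnosis is accurate and matches the state of the art the paper alludes to. Your closing paragraph, pointing toward Bogomolov--Miyaoka--type log inequalities and Harbourne constants as the workable surrogate, is exactly the strategy the paper adopts for its actual results.

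One small caution on your reduction to the irreducible case: the inequality $C^2\ge\sum_i C_i^2$ alone does \emph{not} reduce BNC for reduced curves to BNC for integral curves, since the number of negative components is a priori unbounded; the genuine equivalence (which is known) requires an additional argument bounding how many integral curves of negative self-intersection can appear simultaneously, and your parenthetical ``together with control on how many components can simultaneously be highly negative'' gestures at this without supplying it. That gap is harmless here only because the whole argument is avowedly heuristic.
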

Some surfaces are known to have bounded negativity
(see \cite{Duke, Harbourne1}). For example, surfaces with $\mathbb{Q}$-effective anticanonical divisor such as Del Pezzo surfaces, K3 surfaces and Enriques surfaces have bounded negativity. However, when we replace these surfaces by their blow ups, we do not know if bounded negativity is preserved. Specifically, it is not known whether the blow up of $\mathbb{P}^{2}$ at ten general points has bounded negativity or not.

   Recently in \cite{BdRHHLPSz} the authors have showed the following theorem.
\begin{theorem}(\cite[Theorem~3.3]{BdRHHLPSz})
   Let $\mathcal{L}$ be a line configuration on $\mathbb{P}^{2}_{\mathbb{C}}$. Let $f: X_{s} \rightarrow \mathbb{P}^{2}_{\mathbb{C}}$ be the blowing up at $s$ distinct points on $\mathbb{P}^{2}_{\mathbb{C}}$ and let $\widetilde{\call}$ be the strict transform of $\mathcal{L}$. Then we have $\widetilde{\call}^{2} \geq -4\cdot s$.
\end{theorem}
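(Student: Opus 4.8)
The plan is to translate the self-intersection $\widetilde{\call}^2$ into the combinatorics of the arrangement and then control it by a Hirzebruch-type inequality. Write $\call=\ell_1\cup\dots\cup\ell_d$ for the $d$ lines, so that $[\call]=dH$ with $H$ the hyperplane class and $\call^2=d^2$. Blowing up the points $p_1,\dots,p_s$ with exceptional divisors $E_1,\dots,E_s$, the strict transform is $\widetilde{\call}=f^*(dH)-\sum_{i=1}^s m_iE_i$, where $m_i=\mult_{p_i}\call$ is the number of lines of $\call$ passing through $p_i$. Using $f^*H\cdot E_i=0$ and $E_i\cdot E_j=-\delta_{ij}$ I obtain
$$\widetilde{\call}^2 = d^2-\sum_{i=1}^s m_i^2,$$
so that the assertion $\widetilde{\call}^2\geq -4s$ is equivalent to $\sum_{i=1}^s m_i^2\leq d^2+4s$, i.e.\ to $\sum_{i=1}^s(m_i^2-4)\leq d^2$.

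Next I would pass to a configuration-wide statement that no longer depends on $s$ or on the chosen points. Denote by $t_k$ the number of points of $\P^2_\C$ lying on exactly $k$ lines of $\call$. Since the summands with $m_i\leq 2$ are non-positive while those with $m_i\geq 3$ are positive, discarding the former and enlarging the index set to \emph{all} multiple points only increases the total, giving
$$\sum_{i=1}^s(m_i^2-4)\leq \sum_{k\geq 3}(k^2-4)t_k.$$
Hence it suffices to prove the purely combinatorial bound $\sum_{k\geq 3}(k^2-4)t_k\leq d^2$ for every complex line arrangement, after which $\widetilde{\call}^2=d^2-\sum m_i^2\geq d^2-(d^2+4s)=-4s$ follows for any choice of $s$ points.

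For this last inequality I would combine the elementary identity $\sum_{k\geq 2}\binom{k}{2}t_k=\binom{d}{2}$ with Hirzebruch's inequality, the incarnation of the Miyaoka / Bogomolov--Miyaoka--Yau inequality for complex line arrangements: if $t_d=t_{d-1}=0$ then $t_2+\tfrac34 t_3\geq d+\sum_{k\geq 5}(k-4)t_k$. Writing $k^2-4=2\binom{k}{2}+(k-4)$ and substituting the identity reduces $\sum_{k\geq 3}(k^2-4)t_k\leq d^2$ to $\sum_{k\geq 5}(k-4)t_k\leq d+2t_2+t_3$, which Hirzebruch's inequality yields at once and with room to spare, since $t_2+\tfrac34 t_3-d\leq d+2t_2+t_3$ trivially. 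The main obstacle is exactly that the pair-counting identity by itself is too weak — one checks that it admits combinatorially legal systems $(t_k)$ violating the bound once $d$ is large — so the argument genuinely rests on the complex-geometric input of Hirzebruch's inequality, which is where working over $\C$ is essential. It remains only to dispose of the excluded hypotheses of that inequality by direct inspection: for a pencil ($t_d=1$) one has $\sum_{k\geq 3}(k^2-4)t_k=d^2-4$, for a near-pencil ($t_{d-1}=1$) one gets $(d-1)^2-4$, and the finitely many small-degree arrangements are verified by hand, all comfortably satisfying $\sum_{k\geq 3}(k^2-4)t_k\leq d^2$.
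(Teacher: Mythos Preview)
This theorem is only cited in the paper, not proved there; the paper's own contribution is the analogous Main Theorem for hypersurfaces $S_n\subset\P^3_\C$, whose proof follows the template of \cite{BdRHHLPSz}. Your argument is correct and matches that template: write $\widetilde{\call}^2$ combinatorially in terms of the multiplicities, reduce the assertion to a bound on $\sum_{k}(k^2-4)t_k$ (equivalently, after the identity $I_d-\sum_k k^2 t_k=-\sum_k kt_k$ used in the paper's Main Theorem, a lower bound on $-\sum_k kt_k$), and then close with a Hirzebruch/Miyaoka--type inequality coming from Bogomolov--Miyaoka--Yau. Your algebraic organization via $k^2-4=2\binom{k}{2}+(k-4)$ is a cosmetic variant of the paper's rearrangement, and your extra step reducing arbitrary blow-up centers to the singular points of $\mathcal{L}$ (discarding summands with $m_i\le 2$) is exactly the right way to handle the generality in the statement; the substance of the two arguments is the same.
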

   In this note, we generalize this result to the case of line configurations on smooth hypersurfaces $S_{n}$ of degree $n \geq 3$ in $\mathbb{P}^{3}_{\mathbb{C}}$.

   A classical result tells us that every smooth hypersurface of degree $n=3$ contains $27$ lines. For smooth hypersurfaces of degree $n=4$ we know that the upper bound of the number of lines on quartic surfaces is 64 (claimed by Segre \cite{Segre} and correctly proved by Sch\"utt and Rams \cite{SR}). In general, for degree $n \geq 3$ hypersurfaces $S_{n}$ Boissi\'ere and Sarti (see \cite[Proposition~6.2]{BS}) showed that the number of lines on $S_{n}$ is less than or equal to $n(7n-12)$.

Using  techniques similar to the one introduced in \cite{BdRHHLPSz} we prove the following result.
\begin{theoremA} Let $S_{n}$ be a smooth hypersurface of degree $n \geq 4$ in $\mathbb{P}^{3}_{\mathbb{C}}$. Let $\mathcal{L} \subset S_{n}$ be a line configuration, with the singular locus ${\rm Sing}(\mathcal{L})$ consisting of $s$ distinct points. Let $f : X_{s} \rightarrow S_{n} $ be the blowing up at ${\rm Sing}(\mathcal{L})$ and denote by $\widetilde{\call}$ the strict transform of $\mathcal{L}$. Then we have $$\widetilde{\call}^2 > -4s -2n(n-1)^2.$$
\end{theoremA}
In the last part we study some line configurations on smooth complex cubics and quartics in detail. Similar systematic studies on line configurations on the projective plane were initiated in \cite{Szpond}.

\section{Bounded Negativity viewed by Harbourne Constants}
We start with introducing the Harbourne constants \cite{BdRHHLPSz}.

\begin{definition}\label{def:H-constants}
   Let $X$ be a smooth projective surface and let
   $\mathcal{P}=\{ P_{1},\ldots,P_{s} \}$ be a set of mutually
   distinct $s \geq 1$ points in $X$. Then the \emph{local Harbourne constant of $X$ at $\mathcal{P}$}
   is defined as
   \begin{equation}\label{eq:H-const for calp}
      H(X;\mathcal{P}):= \inf_{C} \frac{\left(f^*C-\sum_{i=1}^s \mult_{P_i}C\cdot E_i\right)^2}{s},
   \end{equation}
   where $f: Y \to X$ is the blow-up of $X$ at the set $\mathcal{P}$
   with exceptional divisors $E_{1},\ldots,E_s$ and the infimum is taken
   over all \emph{reduced} curves $C\subset X$.\\
   Similarly, we define the \emph{$s$--tuple Harbourne constant of $X$}
   as
   $$H(X;s):=\inf_{\mathcal{P}}H(X;\mathcal{P}),$$
   where the infimum now is taken over all $s$--tuples of mutually
   distinct points in $X$. \\
   Finally, we define the \emph{global Harbourne constant of $X$} as
   $$H(X):=\inf_{s \geq 1}H(X;s).$$
\end{definition}
   The relation between Harbourne constants and the BNC can be expressed in the following way.
   Suppose that $H(X)$ is a finite real number.
   Then for any $s \geq 1$ and any reduced curve $D$ on the blow-up of $X$
   at $s$ points, we have
   $$D^2 \geq sH(X).$$
   Hence the BNC holds on all blow ups of $X$ at $s$ mutually distinct points with the constant $b(X) = sH(X)$.
   On the other hand, even if $H(X)=-\infty$, the BNC might still be true.

   It is very hard to compute Harbourne constants in general. Moreover, it is quite tricky to find these numbers even for the simplest types of reduced curves on a well-understood surface.
   \section{Proof of the main result}
Given a configurations of lines on $S_{n}$ we denote by $t_{r}$ the number of its $r$-ple points, at which exactly $r$ lines of the configuration meet. In the sequel we will repeatedly use two elementary equalities, namely $\sum_{i} {\rm mult}_{P_{i}}(C) = \sum_{k \geq 2}kt_{k}$ and $\sum_{k\geq 2} t_{k} = s$. In this section we will study \emph{linear Harbourne constants} $H_{L}$. We define only the local linear Harbourne constant for $S_{n}$ containing a line configuration $\mathcal{L}$ since this is the only one difference comparing to Definition \ref{def:H-constants}.
\begin{definition}
Let $S_{n}$ be a smooth hypersurface of degree $n \geq 2$ in $\mathbb{P}^{3}_{\mathbb{C}}$ containing at least one line and let
   $\mathcal{P}=\{ P_{1},\ldots,P_{s} \}$ be a set of mutually
   distinct $s$ points in $S_{n}$. Then the \emph{local linear Harbourne constant of $S_{n}$ at $\mathcal{P}$}
   is defined as
   \begin{equation}
      H_{L}(S_{n}; \mathcal{P}):= \inf_{\mathcal{L}} \frac{\widetilde{\call}^{2}}{s},
   \end{equation}
where $\widetilde{\call}$ is the strict transform of $\mathcal{L}$ with respect to the blow up $f : X_{s} \rightarrow S_{n} $ at $\mathcal{P}$
   and the infimum is taken over all \emph{reduced} line configurations $\mathcal{L} \subset S_{n}$.
\end{definition}

 Our proof is based on the following result due to Miyaoka \cite[Section~2.4]{Miyaoka}.
 \begin{theorem}
 Let $S_{n}$ be a smooth hypersurface in $\mathbb{P}^{3}_{\mathbb{C}}$ of degree $n \geq 4$ containing a configuration of $d$ lines. Then one has
 $$nd -t_{2} + \sum_{k \geq 3}(k-4)t_{k} \leq 2n(n-1)^2.$$
 \end{theorem}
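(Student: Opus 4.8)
The statement is a logarithmic Bogomolov--Miyaoka--Yau (BMY) inequality in disguise, so the plan is to realise its left-hand side as an orbifold Chern ratio $\bar c_1^2 \le 3\bar c_2$ attached to the pair $(S_n,\call)$ and then to translate both Chern numbers into the combinatorial data $d$ and $t_k$. The decisive structural input is the hypothesis $n\ge 4$: here $K_{S_n}=(n-4)H$ (with $H$ the hyperplane class) is nef, and $S_n$ has non-negative Kodaira dimension (a $K3$ surface for $n=4$, a minimal surface of general type for $n\ge 5$). This is precisely what makes the relevant log pair of log general type, so that the orbifold Miyaoka--Yau inequality is applicable. For $n=3$ the surface $S_n$ is del Pezzo with $-K$ ample, the argument breaks down, and indeed cubics must be treated separately; I expect this positivity requirement to be where the bound $n\ge 4$ is genuinely needed.

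First I would resolve the configuration. Let $\pi\colon Y\to S_n$ be the blow-up of the points of ${\rm Sing}(\call)$ of multiplicity $\ge 3$ (ordinary double points are already simple normal crossings and need not be touched), with exceptional curve $E_i$ over a $k$-fold point, and set $D=\widetilde{\call}+\sum_i E_i$, a normal crossings boundary. The first Chern number is then a routine intersection computation on $S_n$: adjunction gives $L_j^2=2-n$ and $H\cdot L_j=1$, one has $K_{S_n}^2=n(n-4)^2$ and $\sum_{i<j}L_i\cdot L_j=\sum_{k\ge 2}\binom{k}{2}t_k$, whence
\[
  (K_{S_n}+\call)^2=n(n-4)^2+(n-6)d+\sum_{k\ge 2}k(k-1)t_k,
\]
and pulling back along $\pi$ and subtracting the exceptional contributions yields $\bar c_1^2=(K_Y+D)^2$ in closed form.

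The second Chern number is the orbifold Euler number, and this is the delicate step. Starting from $e(S_n)=n^3-4n^2+6n$ one computes the naive logarithmic Euler number $e(S_n\setminus\call)=n^3-4n^2+6n-2d+\sum_{k\ge 2}(k-1)t_k$; however, feeding this directly into $\bar c_1^2\le 3e(Y\setminus D)$ only produces the weaker estimate $nd-s\le 2n(n-1)^2$. The sharp correction $-t_2+\sum_{k\ge 3}(k-4)t_k$ has to come from the genuine orbifold local Euler numbers attached to the $k$-fold points (equivalently, from running BMY on a branched abelian cover of $Y$ adapted to $\call$ and dividing by its degree, in the spirit of Hirzebruch). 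Getting these local contributions exactly right at each $k$-fold point is the main obstacle, since it is what upgrades the crude bound to the stated one.

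Finally I would substitute the two Chern numbers into $\bar c_1^2\le 3\bar c_2$ and simplify, using the algebraic identity $3(n^3-4n^2+6n)-n(n-4)^2=2n(n-1)^2$. The $d$-terms collapse to $-nd$ and the $t_k$-terms to the combination $-t_2+\sum_{k\ge 3}(k-4)t_k$, delivering the inequality $nd-t_2+\sum_{k\ge 3}(k-4)t_k\le 2n(n-1)^2$. In summary, the two places demanding care are verifying the positivity hypothesis that licenses the orbifold BMY on $Y$ (controlling the possible $(-1)$- and $(-2)$-curves that could obstruct nefness of $K_Y+D$), and pinning down the precise orbifold local Euler numbers at the multiple points.
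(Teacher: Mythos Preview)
The paper does not actually prove this statement: it is quoted verbatim as a theorem of Miyaoka, with a reference to \cite[Section~2.4]{Miyaoka}, and is then used as a black box in the proof of the Main Theorem. So there is no argument in the paper against which to compare your proposal.

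That said, your outline is the correct one and is essentially how the result is obtained in the literature: the inequality is an incarnation of the orbifold/logarithmic Bogomolov--Miyaoka--Yau inequality for the pair $(S_n,\mathcal{L})$, and you are right that the hypothesis $n\ge 4$ is precisely what makes $K_{S_n}=(n-4)H$ nef so that the positivity needed for BMY is available. Your intermediate computations check out: $(K_{S_n}+\mathcal{L})^2=n(n-4)^2+(n-6)d+\sum_{k\ge 2}k(k-1)t_k$, $e(S_n\setminus\mathcal{L})=n^3-4n^2+6n-2d+\sum_{k\ge 2}(k-1)t_k$, and the identity $3e(S_n)-K_{S_n}^2=2n(n-1)^2$. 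You are also right that feeding the naive SNC pair $(Y,D)$ into $c_1^2\le 3c_2$ only yields $nd-s\le 2n(n-1)^2$, which is strictly weaker than Miyaoka's bound whenever some multiplicity $k\ge 5$ occurs.

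The two places you flag as delicate are genuinely the content of the proof, and your proposal stops short of resolving them. First, nefness (or the appropriate semipositivity) of $K_Y+D$ can fail on strict transforms $\widetilde{L}_j$ meeting fewer than two singular points of the arrangement, so one must either discard such lines or pass to a log minimal model. Second, and more substantially, the upgrade of the $t_k$-coefficient from $-1$ to $k-4$ comes from the orbifold local Euler contributions at the $k$-fold points (equivalently, in the Hirzebruch picture, from the asymptotics of $e(\widetilde{X}_m)/m^{d-1}$ for the resolved abelian cover as $m\to\infty$); you identify this as the main obstacle but do not carry it out. Until those local terms are written down, what you have is a correct and well-informed strategy rather than a proof --- which is already more than the paper itself supplies for this particular statement.
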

Now we are ready to give a proof of the Main Theorem.
\begin{proof}
Pick a number $n \geq 4$. Recall that using the adjunction formulae one can compute the self-intersection number of a line $l$ on $S_{n}$, which is equal to
$$l^{2} = -2 - K_{S_{n}}.l = -2 - \mathcal{O}(n-4).l = 2-n.$$
Observe that the local linear Harbourne constant at ${\rm Sing}(\mathcal{L})$ has the following form
\begin{equation}
\label{Hconst}
H_{L}(S_{n}; {\rm Sing}(\mathcal{L})) = \frac{ (2-n)d + I_{d} - \sum_{k\geq 2}k^{2}t_{k}}{\sum_{k\geq 2}t_{k}},
\end{equation}
where $I_{d} = 2 \sum_{i < j} l_{i}l_{j}$ denotes the number of incidences of $d$ lines $l_{1}, ..., l_{d}$.
It is easy to see that we have the combinatorial equality
$$I_{d} = \sum_{k \geq 2}(k^{2} - k)t_{k},$$
hence we obtain
$$I_{d} -\sum_{k \geq 2}k^{2}t_{k} = -\sum_{k \geq 2}kt_{k}.$$
Applying this to (\ref{Hconst}) we get
$$H_{L}(S_{n}; {\rm Sing}(\mathcal{L})) = \frac{ (2-n)d - \sum_{k \geq 2} kt_{k}}{\sum_{k \geq 2}t_{k}}.$$
Simple manipulations on the Miyaoka inequality lead to
$$nd + t_{2} -4\sum_{k \geq 2}t_{k} - 2n(n-1)^2 \leq -\sum_{k\geq 2} kt_{k},$$
and finally we obtain
$$H_{L}(S_{n}; {\rm Sing}(\mathcal{L})) \geq -4 + \frac{ 2d + t_{2} -2n(n-1)^2}{s},$$
which completes the proof.
\end{proof}
It is an interesting question how the linear Harbourne constant behaves when degree $n$ of a hypersurface grows. We present two extreme examples.
\begin{example}
Let us consider the Fermat hypersurface of degree $n \geq 3$ in $\mathbb{P}^{3}_{\mathbb{C}}$, which is given by the equation
$$F_{n} \,\, : \,\, x^{n} + y^{n} + z^{n} + w^{n} = 0.$$
It is a classical result that on $F_{n}$ there exists the line configuration $\mathcal{L}_{n}$ consisting of $3n^{2}$ lines and delivers $3n^{3}$ double points and $6n$ points of multiplicity $n$. It is easy to check that
$${\rm lim}_{n \rightarrow \infty} H_{L}(F_{n}; {\rm Sing}(\mathcal{L}_{n})) = {\rm  lim}_{n \rightarrow \infty} \frac{ 3 n^{2} \cdot (2-n) + 12n^{3} - 6n^{2} - 4 \cdot 3n^3 - n^{2} \cdot 6n} {3 n^3 + 6n} = -3.$$
On the other hand, the Main Theorem gives
$$ {\rm lim}_{n \rightarrow \infty} H_{L}(F_{n}; {\rm Sing}(\mathcal{L}_{n})) \geq -4 + {\rm lim}_{n \rightarrow \infty} \frac{ 6n^{2} + 3n^3 - 2n(n-1)^2}{3n^3 + 6n} = -3 \frac{2}{3},$$
which shows that the estimate given there is quite efficient.
\end{example}
\begin{example}
\label{Rams}
This construction comes from \cite{R}. Let us consider Rams hypersurface $\mathbb{P}^{3}_{\mathbb{C}}$ of degree $n \geq 6$ given by the equation
$$R_{n} \, : \, x^{n-1}\cdot y + y^{n-1} \cdot z + z^{n-1} \cdot w + w^{n-1}\cdot x = 0.$$
 On $R_{n}$ there exists a configuration $\mathcal{L}_{n}$ of $n(n-2)+4$ lines, which delivers exactly $2n^2 - 4n + 4$ double points -- this configuration is the grid of $n(n-2)+2$ vertical disjoint lines intersected by two horizontal disjoint lines. The local linear Harbourne constant at ${\rm Sing}(\mathcal{L}_{n})$ is equal to
$$H_{L}(R_{n}; {\rm Sing}(\mathcal{L}_{n})) = \frac{ (n^2-2n+4)\cdot(2-n) + 4n^2 - 8n + 8 - 4\cdot(2n^2 - 4n + 4)}{2n^2 - 4n + 4} = \frac{-n^3}{2n^2 -4n + 4}.$$
Then ${\rm lim}_{n \rightarrow \infty} H_{L}(R_{n}; {\rm Sing}(\mathcal{L}_{n})) = - \infty.$
\end{example}
Example \ref{Rams} presents a quite interesting phenomenon since we can obtain very low linear Harbourne constants having singularities of minimal orders  -- the whole game is made by the large number of (disjoint) lines.
\section{Smooth cubics and quartics}
We start with the case $n = 3$. As we mentioned in the first section every smooth cubic surface contains $27$ line, and the configuration of these lines have only double and triple points. These triple points are called \emph{Eckardt points}. Now we find a lower bound for the linear Harbourne constant for such hypersurfaces.
\begin{proposition}
Under the above notation one has
$$H_{L}(S_{3}; {\rm Sing}(\mathcal{L})) \geq -2 \frac{5}{11}.$$
\end{proposition}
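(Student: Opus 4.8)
The plan is to specialize the computation carried out in the proof of the Main Theorem to the \emph{complete} configuration $\mathcal{L}$ of the $27$ lines on $S_3$. Miyaoka's inequality is stated only for $n\geq 4$, so it is unavailable here and must be replaced by the explicit incidence geometry of the $27$ lines. First I would record the data for $n=3$: here $d=27$, each line satisfies $l^2 = 2-3 = -1$, and $\mathcal{L}$ has only double and triple points, so only $t_2$ and $t_3$ are nonzero. The algebra leading to $\widetilde{\call}^2 = (2-n)d - \sum_{k\geq 2}kt_k$ in the proof above uses only the combinatorial identity $I_d = \sum_{k\geq 2}(k^2-k)t_k$ together with $l^2 = 2-n$, both of which are valid at $n=3$. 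Hence $\widetilde{\call}^2 = -27 - 2t_2 - 3t_3$ and $s = t_2+t_3$.

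Next I would bring in the classical fact that on a smooth cubic each of the $27$ lines meets exactly $10$ of the others, so that the number of intersecting pairs equals $\tfrac12\cdot 27\cdot 10 = 135$. Counting these pairs through the singular points gives $\binom{2}{2}t_2 + \binom{3}{2}t_3 = t_2 + 3t_3 = 135$. Substituting $t_2 = 135 - 3t_3$ collapses the Harbourne constant to a function of the single variable $t_3$,
$$H_L(S_3; {\rm Sing}(\mathcal{L})) = \frac{3t_3 - 297}{135 - 2t_3}.$$
On the admissible range the denominator stays positive, so clearing it (which preserves the inequality) shows that the target bound $H_L \geq -\tfrac{27}{11}$ is \emph{equivalent}, after simplification, to the single inequality $t_3 \leq 18$: indeed $11(3t_3-297) \geq -27(135-2t_3)$ reduces to $378 \geq 21t_3$.

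Thus the entire proposition reduces to the bound on the number of Eckardt points, namely that a smooth complex cubic surface carries at most $18$ of them, the extremal value being realized by the Fermat cubic. I would invoke this classical result directly rather than reprove it, after which the chain $t_3\leq 18 \;\Rightarrow\; 21t_3 \leq 378 \;\Rightarrow\; 11(3t_3-297)\geq -27(135-2t_3)$ yields $H_L \geq -\tfrac{243}{99} = -\tfrac{27}{11} = -2\tfrac{5}{11}$, with equality exactly on the Fermat cubic.

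The main obstacle is precisely this geometric input $t_3 \leq 18$. The purely combinatorial constraints are far too weak: the positivity $t_2\geq 0$ only forces $t_3\leq 45$, and at $t_3=45$ the formula would give $H_L = -162/45 = -3.6 < -\tfrac{27}{11}$. So the sharp Eckardt-point bound is carrying all the real content, while the remaining steps are bookkeeping with the incidence relation $t_2+3t_3=135$ and a single cross-multiplication (equivalently, the observation that the displayed quotient is monotone decreasing in $t_3$).
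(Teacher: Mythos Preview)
Your proposal is correct and follows essentially the same route as the paper: both use the incidence relation $t_2+3t_3=135$ to write $H_L$ as $\frac{3t_3-297}{135-2t_3}$, and both reduce the bound to the classical fact that a smooth cubic has at most $18$ Eckardt points. The only cosmetic difference is that the paper verifies the reduction by checking the monotonicity $H_L(t+1)<H_L(t)$ directly, whereas you cross-multiply to show the target inequality is equivalent to $t_3\leq 18$; these are the same computation in slightly different clothing.
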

\begin{proof}
Recall that the combinatorial equality \cite[Example II.20.]{Urzua} for cubic surfaces has the form
$$135 = t_{2} + 3t_{3}.$$
 Moreover, another classical result asserts that the maximal number of \emph{Eckardt} points is equal to $18$ and this number is obtained on Fermat cubic. In order to get a sharp lower bound for $H_{L}$ we need to consider the case when the number of Eckardt points is the largest. To see this we show that the linear Harbourne constant for $t$ triple points is greater then for $t+1$ triple points. Simple computations show that
$$H_{L}(S_{3};t) = \frac{-297+3t}{135-2t},$$
$$H_{L}(S_{3};t+1) = \frac{-294+3t}{133-2t},$$
and $H_{L}(S_{3};t+1) < H_{L}(S_{3};t)$ iff $(-297+3t)\cdot(133-2t) - (-294+3t)\cdot(135-2t) > 0$ for all $t \in \{0, ..., 18\}$, which is obvious to check.
Having this information in hand we can calculate that for $18$ triple points and $81$ double points the local linear Harbourne constant at ${\rm Sing}(\mathcal{L})$ is equal to
$$H_{L}(F_{3};{\rm Sing}(\mathcal{L})) = \frac{27\cdot(-1) + 270 - 4\cdot81 - 9 \cdot 18}{99}  = -2 \frac{5}{11},$$
which ends the proof.
\end{proof}
\begin{example}
Now we consider the case $n=4$ and we start with the configuration of $64$ lines on Schur quartic $Sch$. It is well-known that every line from this configuration intersects exactly $18$ other lines -- see for instance \cite[Proposition 7.1]{SR}. One can check that these $64$ lines deliver $8$ quadruple points, $64$ triple points and $336$ double points (in \cite{Urzua} we can find that the number of double points is equal to $192$, which is false). Then the local linear Harbourne constant at ${\rm Sing}(\mathcal{L})$ is equal to
$$H_{L}(Sch; {\rm Sing}(\mathcal{L})) = \frac{(-2)\cdot64 + 1152 -16\cdot8 - 9\cdot 64 - 4\cdot 336}{336 + 64 + 8} = -2.509.$$
\end{example}
Now we present an example of a line configuration on a smooth quartic which deliver the most negative (according to our knowledge) local linear Harbourne constant for this kind of surfaces.
\begin{example}[Bauer configuration of lines]
Let us consider Fermat quartic $F_{4}$. It is well-known that on $F_{4}$ there exists the configuration of $48$ lines. From this configuration one can extract a subconfiguration of $16$ lines which has only $8$ quadruple points.
Then the local linear Harbourne constant at ${\rm Sing}(\mathcal{L})$ is equal to
$$H_{L}(F_{4}; {\rm Sing}(\mathcal{L})) = \frac{ 16\cdot(-2) + 16\cdot6  - 16 \cdot8}{8} = -8.$$

Using Main Theorem we get $H_{L}(F_{4}; {\rm Sing}(\mathcal{L})) \geq -9$, which also shows efficiency of our result.
\end{example}
\paragraph*{\emph{Acknowledgement.}}
The author like to express his gratitude to Thomas Bauer for sharing Example 4.3, to S\l awomir Rams for pointing out his construction in \cite{R} and to Tomasz Szemberg and Halszka Tutaj-Gasi\'nska for useful remarks. Finally, the author would like to thank the anonymous referee for many useful comments which allowed to improve the exposition of this note. The author is partially supported by National Science Centre Poland Grant 2014/15/N/ST1/02102.


\bigskip
   Piotr Pokora,
   Instytut Matematyki,
   Pedagogical University of Cracow,
   Podchor\c a\.zych 2,
   PL-30-084 Krak\'ow, Poland.

\nopagebreak
   \textit{E-mail address:} \texttt{piotrpkr@gmail.com}


\end{document}